\newtheorem{Thm}{Theorem}[section]
\newtheorem{Lem}{Lemma}[section]
\numberwithin{equation}{section}
\newenvironment{proof}{\medskip\par\noindent{\bf Proof\/}:\quad}{\qquad
\raisebox{-0.5mm}{\rule{1.5mm}{1mm}}\vspace{6pt}}
\begin{document}
\title{Critical fractional Kirchhoff problems: Uniqueness and Nondegeneracy}

\author{
{Zhipeng Yang$^{1}$}\thanks{Corresponding author:yangzhipeng326@163.com} and {Yuanyang Yu$^{2}$}\thanks{yyysx43@163.com}\\
\small Department\, of \,Mathematics, Yunnan\, Normal\, University, Kunming, 650500, P.R.China.$^{1}$\\
\small School of Mathematics and Statistics, Yunnan University, Kunming 650500, People’s Republic of China$^{2}$\\
}
\date{} \maketitle
\textbf{Abstract.}
In this paper, we consider the following critical fractional Kirchhoff equation
\begin{equation*}
\Big(a+b{\int_{\mathbb{R}^{N}}}|(-\Delta)^{\frac{s}{2}}u|^2dx\Big)(-\Delta)^su=|u|^{2^*_s-2}u,\quad \text{in}\ \mathbb{R}^{N},
\end{equation*}
where $a,b>0$, $\frac{N}{4}<s<1$, $2^*_s=\frac{2N}{N-2s}$ and $(-\Delta )^s$ is the fractional Laplacian. We prove the uniqueness and nondegeneracy of positive solutions to the problem, which can be used to study the singular perturbation problems concerning fractional Kirchhoff equations.

\vspace{6mm} \noindent{\bf Keywords:} Fractional Kirchhoff equations; Uniqueness; Nondegeneracy.

\vspace{6mm} \noindent
{\bf 2010 Mathematics Subject Classification.} 35R11, 35A15, 47G20.


\section{Introduction and main results}

In this paper, we are concerned with the following fractional Kirchhoff problem
\begin{equation}\label{eq1.2}
\Big(a+b{\int_{\mathbb{R}^{N}}}|(-\Delta)^{\frac{s}{2}}u|^2dx\Big)(-\Delta)^su=u^{2^*_s-1},\quad \text{in}\ \mathbb{R}^{N},
\end{equation}
where $a,b>0$ are parameters, $0<\frac{s}{2}<\frac{N}{4}<s<1$, $(-\Delta )^s$ is the pseudo-differential operator defined by
\begin{equation*}
\mathcal{F}((-\Delta)^su)(\xi)=|\xi|^{2s}\mathcal{F}(u)(\xi),\ \ \xi\in \mathbb{R}^N,
\end{equation*}
where $\mathcal{F}$ denotes the Fourier transform, and $2^*_s=\frac{2N}{N-2s}$ is the standard fractional Sobolev critical exponent.
\par
If $s=1$, equation \eqref{eq1.2} reduces to the well known Kirchhoff type problem, which and their variants have been studied extensively in the literature. The equation that goes under the name of Kirchhoff equation was proposed in \cite{Kirchhoff1883} as a model for the transverse oscillation of a stretched string in the form
\begin{equation}\label{eq1.3}
\rho h \partial_{t t}^{2} u-\left(p_{0}+\frac{\mathcal{E}h}{2 L} \int_{0}^{L}\left|\partial_{x} u\right|^{2} d x\right) \partial_{x x}^{2} u=0,
\end{equation}
for $t \geq 0$ and $0<x<L$, where $u=u(t, x)$ is the lateral displacement at time $t$ and at position $x, \mathcal{E}$ is the Young modulus, $\rho$ is the mass density, $h$ is the cross section area, $L$ the length of the string, $p_{0}$ is the initial stress tension.
Problem \eqref{eq1.3} and its variants have been studied extensively in the literature. Bernstein obtains the global stability result in \cite{Bernstein1940BASUS}, which has been generalized to arbitrary dimension $N\geq 1$ by Poho\v{z}aev in \cite{Pohozaev1975MS}. We also point out that such problems may describe a process of some biological systems dependent on the average of itself, such as the density of population (see e.g. \cite{Arosio-Panizzi1996TAMS}).  From a mathematical point of view, the interest of studying Kirchhoff equations comes from the nonlocality of Kirchhoff type equations. We refer to \cite{MR3987384} for a recent survey of the results connected to this model.
\par
On the other hand, the interest in generalizing to the fractional case the model introduced by Kirchhoff does
not arise only for mathematical purposes. In fact, following the ideas of \cite{MR2675483} and the concept
of fractional perimeter, Fiscella and Valdinoci proposed in \cite{MR3120682} an equation describing the
behaviour of a string constrained at the extrema in which appears the fractional length of the rope. Recently, problem similar to \eqref{eq1.2} has been extensively investigated by many authors using different techniques and producing several relevant results  (see, e.g. \cite{MR4056169,Valdinocibook2017,MR4245633,MR3985380,Gu-Yang,MR4021897}).
\par
In this paper, we study the uniqueness and nondegeneracy of positive solutions to problem \eqref{eq1.2}. These quantitative properties play a fundamental role in singular perturbation problems.  For the fractional Schr\"{o}dinger equation such as
\begin{equation}\label{eq1.5}
(-\Delta)^su+u=u^p, \quad \text{in} \,\,\mathbb{R}^{N}.
\end{equation}
In contrast to the classical limiting case when $s=1$, in which standard ODE techniques are applicable, uniqueness of ground state solutions to Eq. \eqref{eq1.5} is a really difficult problem. In the case that $s=\frac{1}{2}$ and $N=1$, Amick and Toland \cite{MR1111746}, they obtained the uniqueness result for solitary waves of the Benjamin-Ono equation. After that, Lenzmann \cite{MR2561169} obtained the uniqueness of ground states for the pseudorelativistic Hartree equation in $3$-dimension. In \cite{MR3070568}, Frankand and Lenzmann extends the results in \cite{MR1111746} to the case that $s \in(0,1)$ and $N=1$ with completely new methods. For the high dimensional case, Fall and Valdinoci \cite{MR3207007} established the uniqueness and nondegeneracy of ground state solutions of \eqref{eq1.5} when $s \in(0,1)$ is sufficiently close to $1$ and $p$ is subcritical. In their striking paper \cite{MR3530361}, Frank, Lenzmann and Silvestre solved the problem completely, and they showed that the ground state solutions of \eqref{eq1.5} is unique for arbitrary space dimensions $N \geq 1$ and all admissible and subcritical exponents $p>0 .$ For a systematical research on applications of nondegeneracy of ground states to perturbation problems, we refer to Ambrosetti and Malchiodi \cite{Ambrosetti-Malchiodibook} and the references therein.
\par
One of the main idea is based on the scaling technique which allows us to find a relation between solutions of  \eqref{eq1.2} and \eqref{eq1.5}. In order to state our main results, we recall some preliminary results for the fractional Laplacian. For $0<s<1$,
denote by $D=D^{s,2}(\mathbb{R}^N)$ the completion of $C^{\infty}_0(\mathbb{R}^N))$ under the seminorm
\begin{equation*}
  \|u\|^2_D=\int_{\mathbb{R}^N}|(-\Delta)^{\frac{s}{2}}u|^2dx.
\end{equation*}
From \cite{Nezza-Palatucci-Valdinoci2012BSM}, we have
\begin{equation*}
\|(-\Delta)^{\frac{s}{2}}u\|^2_2=\int_{\mathbb{R}^N}|\xi|^{2s}|\mathcal{F}(u)|^2d\xi=\frac{1}{2}C(N,s)
\int_{\mathbb{R}^N\times\mathbb{R}^N}\frac{|u(x)-u(y)|^2}{|x-y|^{N+2s}}dxdy
\end{equation*}
and the fractional Gagliardo-Nirenberg-Sobolve inequality
\begin{equation}\label{eq1.8}
\int_{\mathbb{R}^N}|u|^{p+1}dx\leq\mathcal{S}\Big(\int_{\mathbb{R}^N}|(-\Delta
)^{\frac{s}{2}}u|^2dx\Big)^{\frac{N(p-1)}{4s}}
\Big(\int_{\mathbb{R}^N}|u|^2dx\Big)^{\frac{p-1}{4s}(2s-N)+1},
\end{equation}
where $\mathcal{S}>0$ is the best constant. It follows from
\eqref{eq1.8} that
\begin{equation}\label{eq1.9}
J(u)=\frac{\mathcal{S}\Big(\int_{\mathbb{R}^N}|(-\Delta)^{\frac{s}{2}}u|^2dx\Big)^{\frac{N(p-1)}{4s}}\Big(\int_{\mathbb{R}^N}|u|^2dx\Big)^{\frac{p-1}{4s}(2s-N)+1}}
{\int_{\mathbb{R}^N}|u|^{p+1}dx}>0.
\end{equation}
It has been pointed out that in \cite{MR3530361} that $\mathcal{S}=\inf\limits_{u\in H^s(\mathbb{R}^N)\backslash\{0\}}J(u)$ is actually
attained. Here we say that $Q$ is a ground state solution if a solution of \eqref{eq1.5} is also a minimizer for $J(u)$.
\par
For fractional Kirchhoff problems, not much is known in this respect. Recently, R\v{a}dulescu and Yang \cite{R-Yang} established uniqueness and nondegeneracy for positive solutions to Kirchhoff equations with subcritical growth. More precisely,
they proved that the following fractional Kirchhoff equation
\begin{equation*}
\Big(a+b{\int_{\mathbb{R}^{N}}}|(-\Delta)^{\frac{s}{2}}u|^2dx\Big)(-\Delta)^su+mu=|u|^{p-2}u,\quad \text{in}\ \mathbb{R}^{N},
\end{equation*}
where $a,b,m>0$, $\frac{N}{4}<s<1$, $2<p<2^*_s=\frac{2N}{N-2s}$, has a unique nondegenerate positive radial solution. As a counterpart to this result, we have the following theorem for fractional Kirchhoff equations with critical Sobolev growth.

\begin{Thm}\label{Thm1.1}Let $a,b>0$ and $\frac{N}{4}<s<1$. Then equation \eqref{eq1.2} has a unique ground state solution $U\in D$  up to translation. Moreover, $U$ is nondegenerate in the sense that there holds
$$\ker \mathcal{L}_{+} =span\{\partial_{x_1}U, \partial_{x_2}U,\partial_{x_3}U,\frac{N-2s}{2}U+x\cdot\nabla u\},$$
where $\mathcal{L}_{+} $ is defined as
\begin{equation}\label{eq1.6}
\mathcal{L}_{+} \varphi=\Big(a+b{\int_{\mathbb{R}^{N}}}|(-\Delta
)^{\frac{s}{2}}U|^2dx\Big)(-\Delta
)^s\varphi-(2^*_s-1)U^{2^*_s-2}\varphi+2b\Big({\int_{\mathbb{R}^{N}}}(-\Delta
)^{\frac{s}{2}}U(-\Delta )^{\frac{s}{2}}\varphi dx\Big)(-\Delta )^sU
\end{equation}
acting on $L^2(\mathbb{R}^N)$ with domain $D$.
\end{Thm}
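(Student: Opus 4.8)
The plan is to reduce both assertions, by an algebraic rescaling, to the already-known theory of the purely critical fractional equation $(-\Delta)^sw=w^{2^*_s-1}$ on $D$.

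\textbf{Uniqueness and existence.} First I would record the scaling dictionary: if $U\in D$ is a positive solution of \eqref{eq1.2} and $A:=a+b\|U\|_D^2>0$ is its Kirchhoff weight, then $w:=A^{-1/(2^*_s-2)}U$ solves $(-\Delta)^sw=w^{2^*_s-1}$, and conversely every positive solution of \eqref{eq1.2} arises in this way. The key structural point is that the seminorm $\|\cdot\|_D$ is invariant under the translation--dilation group $w\mapsto\lambda^{(N-2s)/2}w(\lambda(\cdot-y))$, which is exactly the symmetry group of the critical equation. Invoking the classification of positive $D$-solutions of $(-\Delta)^sw=w^{2^*_s-1}$ --- these are precisely the fractional Aubin--Talenti bubbles $W_{\lambda,y}$, and by scale invariance they all have one and the same $D$-seminorm $\tau_0=\tau_0(N,s)$ --- one sees that $A$ must solve the scalar identity $A=a+b\tau_0A^{2/(2^*_s-2)}$, i.e.\ $\alpha:=A^{1/(2^*_s-2)}$ solves $\alpha^{2^*_s-2}=a+b\tau_0\alpha^2$. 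Since $\tfrac N4<s<1$ forces $2^*_s-2=\tfrac{4s}{N-2s}>2$, the function $\alpha\mapsto\alpha^{2^*_s-2}-b\tau_0\alpha^2$ vanishes at $0$, then decreases, then increases to $+\infty$, so this identity has a unique positive root $\alpha$. Hence \eqref{eq1.2} admits the positive solution $U=\alpha W$ ($W$ the standard bubble), every positive solution equals $\alpha W_{\lambda,y}$ for this same $\alpha$, and $U$ is unique up to translations and dilations; by a standard rearrangement argument the ground state is positive, hence equals such a $U$.

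\textbf{Nondegeneracy.} Observe that $\mathcal{L}_{+}$ is exactly the linearization $F'(U)$ of $F(u):=(a+b\|u\|_D^2)(-\Delta)^su-u^{2^*_s-1}$. The inclusion $\supseteq$ then follows by differentiating the identities $F\big(U(\cdot-y)\big)\equiv0$ and $F\big(\lambda^{(N-2s)/2}U(\lambda\cdot)\big)\equiv0$ at $y=0$ and $\lambda=1$: this places $\partial_{x_j}U$ ($j=1,\dots,N$) and $\Lambda U:=\tfrac{N-2s}{2}U+x\cdot\nabla U$ in $\ker\mathcal{L}_{+}$, the cross term $2b\langle U,\cdot\rangle_D(-\Delta)^sU$ contributing nothing because $\langle U,\partial_{x_j}U\rangle_D=0$ and $\langle U,\Lambda U\rangle_D=\tfrac12\frac{d}{d\lambda}\|U_\lambda\|_D^2\big|_{\lambda=1}=0$ by scale invariance. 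For $\subseteq$, take $\varphi\in D$ with $\mathcal{L}_{+}\varphi=0$ and set $c:=2b\langle U,\varphi\rangle_D$. Writing $U=\alpha W$, $A=\alpha^{2^*_s-2}$, $U^{2^*_s-2}=AW^{2^*_s-2}$ and $(-\Delta)^sU=\alpha W^{2^*_s-1}$, dividing by $A$ turns $\mathcal{L}_{+}\varphi=0$ into $L^0_{+}\varphi=-c\,\alpha^{3-2^*_s}W^{2^*_s-1}$, where $L^0_{+}\psi:=(-\Delta)^s\psi-(2^*_s-1)W^{2^*_s-2}\psi$ is the linearized operator at the bubble. Since $L^0_{+}W=-(2^*_s-2)W^{2^*_s-1}$, this reads $L^0_{+}(\varphi-\kappa W)=0$ with $\kappa:=\tfrac{c\,\alpha^{3-2^*_s}}{2^*_s-2}$, and the known nondegeneracy of the bubble for the critical equation gives $\varphi-\kappa W\in\mathrm{span}\{\partial_{x_j}W,\ \Lambda W\}$. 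Pairing $\varphi=\kappa W+(\cdots)$ against $W$ in $\langle\cdot,\cdot\rangle_D$ and using $\langle W,\partial_{x_j}W\rangle_D=\langle W,\Lambda W\rangle_D=0$ yields $c=2b\alpha\kappa\tau_0$, i.e.\ $c\big(1-\tfrac{2b\tau_0\alpha^{4-2^*_s}}{2^*_s-2}\big)=0$. If the bracket vanished then $2b\tau_0\alpha^2=(2^*_s-2)\alpha^{2^*_s-2}$, which together with $\alpha^{2^*_s-2}=a+b\tau_0\alpha^2$ gives $2a=(4-2^*_s)\alpha^{2^*_s-2}<0$ because $\tfrac N4<s$ means $2^*_s>4$ --- impossible. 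Hence $c=0$, $\kappa=0$, and $\varphi\in\mathrm{span}\{\partial_{x_j}U,\ \Lambda U\}$ (recall $U=\alpha W$), which is the asserted kernel.

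\textbf{Main obstacle.} The two genuinely deep inputs --- the classification of positive $D$-solutions of $(-\Delta)^sw=w^{2^*_s-1}$ and the nondegeneracy of the bubble for that equation --- are available in the literature and are used here as black boxes. The real content of the argument is the final computation showing that the nonlocal Kirchhoff cross term does not enlarge the kernel, and this is precisely where the restriction $\tfrac N4<s$ (equivalently $2^*_s>4$, equivalently $2^*_s-2>2$) is indispensable: it is what makes the scalar equation for $\alpha$ uniquely solvable and what makes the sign of $(4-2^*_s)\alpha^{2^*_s-2}$ force $c=0$. Some routine care is also needed to check that $\partial_{x_j}W$ and $\Lambda W$ actually lie in $D$ and that the symmetry differentiations are licit, which follows from the standard decay and regularity estimates for the bubble.
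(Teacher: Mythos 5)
Your argument is correct, and the nondegeneracy half takes a genuinely different (and cleaner) route than the paper's. For uniqueness the two proofs are essentially the same scaling reduction, only parametrized differently: the paper dilates the argument, setting $U(x)=Q(\mathcal{E}_0^{-1/(2s)}x)$ and solving $\mathcal{E}_0=a+b\|Q\|_D^2\mathcal{E}_0^{(N-2s)/(2s)}$, whereas you multiply by a constant, $U=\alpha W$ with $\alpha^{2^*_s-2}=a+b\tau_0\alpha^2$; since $\frac{2}{2^*_s-2}=\frac{N-2s}{2s}$ the two scalar equations are the same, and both use $s>N/4$ (i.e.\ $2^*_s-2>2$) to get a unique root. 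You are also more careful than the theorem's own statement in noting that the critical equation's dilation invariance means ``unique up to translations \emph{and dilations}.'' For nondegeneracy the paper decomposes $L^2$ into spherical harmonics, isolates the radial kernel in Lemma 3.1 via the Frank--Lenzmann--Silvestre inversion formula for $T_+^{-1}(U^{2^*_s-1})$, and then rules out $l\geq 1$ contributions by Perron--Frobenius arguments for the fibered operators $L_{+,l}$. You instead observe that after dividing by $A=\alpha^{2^*_s-2}$, the equation $\mathcal{L}_+\varphi=0$ becomes $L^0_+\varphi=-c\alpha^{3-2^*_s}W^{2^*_s-1}$, use $L^0_+W=-(2^*_s-2)W^{2^*_s-1}$ to absorb the right-hand side into $L^0_+(\varphi-\kappa W)=0$, and then import the bubble's nondegeneracy wholesale; the final pairing against $W$ in $\langle\cdot,\cdot\rangle_D$ gives $c\bigl(1-\tfrac{2b\tau_0\alpha^{4-2^*_s}}{2^*_s-2}\bigr)=0$, and the bracket is nonzero because it would force $2a=(4-2^*_s)\alpha^{2^*_s-2}<0$. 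This scalar computation is the same $s>N/4$ dichotomy the paper uses in Lemma 3.1 (their condition $-\tfrac{(c-a)(2s-N)}{2sc}\neq 1$), but your route avoids the spherical-harmonic fibering and the Perron--Frobenius machinery entirely, treating the bubble's nondegeneracy as a black box. The trade-off: the paper's spectral approach is self-contained modulo the FLS inversion formula and could in principle detect degeneracy sector by sector, while yours is shorter, more transparent about where $s>N/4$ enters, and more easily transportable to other Kirchhoff-type perturbations. One routine item you correctly flag but should spell out: $\partial_{x_j}W,\Lambda W\in D$ and differentiation under the translation/dilation group is licit, which follows from the explicit algebraic decay of the bubble.
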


\section{Proof of Theorem \ref{Thm1.1}}
In this section we prove Theorem \ref{Thm1.1}. Our methods depend on the following result for the well-known fractional critical problem
\begin{equation}\label{eq5.1}
(-\Delta)^{s} u=u^{2_{s}^{*}-1}, \quad x \in \mathbb{R}^{N}.
\end{equation}
By using the moving planes method of integral form, Chen, Li and Ou \cite{MR2200258} proved that every positive regular solution of \eqref{eq5.1} is radially symmetric and monotone about some point, and therefore assumes the form
\begin{equation*}
Q(x)=C(N, s)\left(\frac{\mu}{\mu^{2}+|x-\xi|^{2}}\right)^{\frac{N-2s}{2}}, \quad C(N, s) \neq 0, \mu>0, \quad \xi \in \mathbb{R}^{N}.
\end{equation*}
\subsection{Proof of Uniqueness}
Let $Q$ be the uniquely positive solution of \eqref{eq1.5} and also a minimizer of $J(u)$. Consider the
equation
\begin{equation}\label{eq2.1}
f(\mathcal{E})=\mathcal{E}-a-b\|(-\Delta)^{\frac{s}{2}}Q\|^2_2\mathcal{E}^{\frac{N-2s}{2s}}=0,
\quad \mathcal{E}\in (a,+\infty).
\end{equation}
Recall that $s>\frac{N}{4}$, we have $\frac{N-2s}{2s}<1$, which implies that $\lim\limits_{\mathcal{E}\to +\infty}f(\mathcal{E})=+\infty$.
Moreover, one has $f(a)<0$. Consequently, there exists $\mathcal{E}_0>a$ such that $f(\mathcal{E}_0)=0$. Let
\begin{equation}\label{eq2.2}
U(x)=Q(\mathcal{E}_0^{-\frac{1}{2s}}x)=Q(\tilde{x}).
\end{equation}
It follows from the definition of $(-\Delta)^s$ that
\begin{equation*}
\Big((-\Delta)^sU\Big)(x)=\mathcal{E}_0^{-1}\Big((-\Delta)^sQ\Big)(\tilde{x}),
\end{equation*}
which implies that $U$ is a positive solution of the following equation
\begin{equation}\label{eq2.3}
\mathcal{E}_0(-\Delta )^sU=U^{2^*_s-1} \quad \text{in}\quad \mathbb{R}^{N}.
\end{equation}
Note that
\begin{equation*}
\|(-\Delta)^{\frac{s}{2}}U\|^2_2=\|(-\Delta)^{\frac{s}{2}}Q\|^2_2\mathcal{E}_0^{\frac{N-2s}{2s}}.
\end{equation*}
We have $\mathcal{E}_0=a+b\|(-\Delta)^{\frac{s}{2}}U\|^2_2$. From \eqref{eq2.3}, we conclude that $U$ is a positive solution of
\eqref{eq1.2}. Furthermore, $U$ is a minimizer for $J(u)$ since $J(U)=J(Q)$.  Let $U$ be a ground state positive solution of \eqref{eq1.2} and set
\begin{equation*}
\mathcal{E}_0=a+b\|(-\Delta)^{\frac{s}{2}}U\|^2_2\quad \text{and}\quad \tilde{U}(x)=U(\mathcal{E}_0^{\frac{1}{2s}}x).
\end{equation*}
It is easy to check that $\tilde{U}$ is a positive solution of \eqref{eq1.5} and a minimizer of
$J(u)$. Hence, $\tilde{U}$ is a ground state solution. Then there exists some $x_0\in \mathbb{R}^N$ such that $\tilde{U}(x)=Q(x-x_0)$, where $Q$ is the unique ground state of \eqref{eq1.5}. Consequently, we have
\begin{equation*}
{U}(x)=Q\Big(\mathcal{E}_0^{-\frac{1}{2s}}x-x_0\Big).
\end{equation*}
and
\begin{equation}\label{eq3.1}
\mathcal{E}_0=a+b\|(-\Delta)^{\frac{s}{2}}Q\|^2_2\mathcal{E}_0^{\frac{N-2s}{2s}}.
\end{equation}
\par
 In order to arrive at the desired conclusion, we have to prove  $\mathcal{E}_0$ is uniquely defined. For this, consider the real function $f$
 defined by \eqref{eq2.1}. Recall that $\frac{N-2s}{2s}<1$. It is evident that $f$ is strictly  increasing on $(a,+\infty)$ if $\frac{N-2s}{2s}\leq 0$. Moreover,  for $0<\frac{N-2s}{2s}<1$, one can verify that the solution  $\mathcal{E}_0$ of \eqref{eq2.1} must belong to the increasing
 interval  of $f$ since a solution $\mathcal{E}$ satisfies
\begin{equation*}
\mathcal{E}-b\|(-\Delta)^{\frac{s}{2}}Q\|^2_2\mathcal{E}^{\frac{N-2s}{2s}}>0.
\end{equation*}
Hence, $\mathcal{E}_0$ is uniquely defined by \eqref{eq3.1}. The proof is completed.

\subsection{Proof of nondegeneracy}
Differentiating the equation
$$
(-\Delta)^{s} Q=Q^{2^*_s-1} \quad \text { in } \mathbb{R}^{N},
$$
with respect of the parameters at $\mu=1, \xi=0$, we see that the functions
$$
\partial_{\mu} Q=\frac{N-2 s}{2}Q+x \cdot \nabla Q, \quad \partial_{\xi_{i}} Q=-\partial_{x_{i}}Q
$$
annihilate the linearized operator around $Q$; namely, they satisfy the equation
$$
(-\Delta)^{s} \phi=(2^*_s-1)Q^{2^*_s-2} \phi \quad \text { in } \mathbb{R}^{N}.
$$
With no loss of generality, we assume that $U(x)=U(|x|)$ is the unique positive radial energy solution to Eq. \eqref{eq1.2}. Then we have
\begin{equation}\label{6}
\operatorname{Ker}\mathcal{L}_+=\operatorname{span}\left\{\frac{N-2s}{2}U+x \cdot \nabla U, U_{x_{1}}, U_{x_{2}}, \cdots, U_{x_{N}}\right\}.
\end{equation}
Note that $\frac{N-2s}{2}U+x \cdot \nabla U=\frac{N-2s}{2}U+r U^{\prime}(r)$ with $r=|x|$ is a radial function in $D$. For simplicity, denote $D_{\mathrm{rad}}=\{v \in D: v(x)=v(|x|)\}$.

\begin{Lem}\label{Lem3.1}
Let $\mathcal{L}_{+} \varphi=0$ for $\varphi \in D_{\mathrm{rad}}$, then $\varphi=\lambda(\frac{N-2s}{2}U+x \cdot \nabla U)$ for some $\lambda \in \mathbb{R}$.
\end{Lem}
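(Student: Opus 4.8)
The plan is to reduce Lemma \ref{Lem3.1} to the known nondegeneracy of the Aubin--Talenti bubble $Q$ for the purely fractional critical equation \eqref{eq5.1}, and then to close a self-consistency relation that the nonlocal Kirchhoff term forces upon $\varphi$. Throughout, write $\mathcal{E}_0=a+b\|(-\Delta)^{s/2}U\|_2^2$, so that $U$ solves $\mathcal{E}_0(-\Delta)^sU=U^{2_s^*-1}$ and, by the uniqueness already proved, $U(x)=Q(\mathcal{E}_0^{-1/(2s)}x)$ (the translation may be taken to be $0$ since $U$ is radial).

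First I would note that the last term in $\mathcal{L}_+$ (see \eqref{eq1.6}) is a scalar multiple of $(-\Delta)^sU$. Indeed, if $\mathcal{L}_+\varphi=0$ then, setting the \emph{number}
$$c:=2b\int_{\mathbb{R}^N}(-\Delta)^{s/2}U\,(-\Delta)^{s/2}\varphi\,dx=2b\int_{\mathbb{R}^N}\big((-\Delta)^sU\big)\varphi\,dx,$$
the function $\varphi$ satisfies $\mathcal{E}_0(-\Delta)^s\varphi-(2_s^*-1)U^{2_s^*-2}\varphi=-c\,(-\Delta)^sU=-c\,\mathcal{E}_0^{-1}U^{2_s^*-1}$. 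Rescaling via $\psi(y):=\varphi(\mathcal{E}_0^{1/(2s)}y)$ converts this, by the scaling behaviour of $(-\Delta)^s$, into
$$L_+\psi=-c\,\mathcal{E}_0^{-1}Q^{2_s^*-1},\qquad L_+:=(-\Delta)^s-(2_s^*-1)Q^{2_s^*-2},$$
the standard linearized operator at $Q$, with $\psi\in D_{\mathrm{rad}}$. Since $L_+Q=(2-2_s^*)Q^{2_s^*-1}$ and $2_s^*>2$, an explicit particular solution is $\psi_p=\frac{c}{(2_s^*-2)\mathcal{E}_0}Q$. Invoking the (radial) nondegeneracy of $Q$ for \eqref{eq5.1}, namely $\ker L_+\cap D_{\mathrm{rad}}=\mathbb{R}\big(\frac{N-2s}{2}Q+x\cdot\nabla Q\big)$, we get $\psi=\psi_p+\lambda\big(\frac{N-2s}{2}Q+x\cdot\nabla Q\big)$ for some $\lambda\in\mathbb{R}$; undoing the scaling (and using $y\cdot(\nabla Q)(y)=x\cdot\nabla U(x)$ under $x=\mathcal{E}_0^{1/(2s)}y$),
$$\varphi=\frac{c}{(2_s^*-2)\mathcal{E}_0}\,U+\lambda\Big(\frac{N-2s}{2}U+x\cdot\nabla U\Big).$$

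Next I would substitute this back into the definition of $c$. Using $(-\Delta)^sU=\mathcal{E}_0^{-1}U^{2_s^*-1}$ together with the scaling identity $\int_{\mathbb{R}^N}U^{2_s^*-1}\big(\frac{N-2s}{2}U+x\cdot\nabla U\big)dx=0$ (one integration by parts and $\frac{N}{2_s^*}=\frac{N-2s}{2}$), the $\lambda$-term drops out and one is left with the scalar relation
$$c=\frac{2b\,\mathcal{E}_0^{-2}}{2_s^*-2}\Big(\int_{\mathbb{R}^N}U^{2_s^*}dx\Big)\,c.$$
Hence $c=0$ — in which case $\varphi=\lambda\big(\frac{N-2s}{2}U+x\cdot\nabla U\big)$ and we are done — unless the bracketed constant equals $1$.

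Ruling out that last alternative is the only delicate point, and it is precisely where $s>\frac{N}{4}$ enters. Testing the equation for $U$ against $U$ gives $\int U^{2_s^*}=\mathcal{E}_0A$ with $A:=\|(-\Delta)^{s/2}U\|_2^2$, and $\mathcal{E}_0=a+bA$; so the equality $\frac{2b\mathcal{E}_0^{-2}}{2_s^*-2}\int U^{2_s^*}=1$ reduces to $bA\,(4-2_s^*)=(2_s^*-2)\,a$. But $2_s^*-2=\frac{4s}{N-2s}>0$ while $4-2_s^*=\frac{2(N-4s)}{N-2s}<0$ exactly because $s>\frac{N}{4}$, so the left side is negative and the right side positive, a contradiction. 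Therefore $c=0$, which proves the lemma. I expect the main difficulty to be purely organisational: carrying the $U$--$Q$ rescaling through consistently and recognizing that the Kirchhoff perturbation enters only via the single scalar $c$, so that the whole argument collapses to the one-line sign comparison above plus the cited nondegeneracy of the critical bubble.
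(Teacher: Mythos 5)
Your proof is correct and follows essentially the same strategy as the paper's: reduce $\mathcal{L}_+\varphi=0$ to an inhomogeneous equation for the local linearized operator at the bubble $Q$, invoke the known radial nondegeneracy of $Q$, and close the resulting scalar self-consistency by a sign comparison that uses $s>\frac{N}{4}$. The only real difference is the choice of particular solution: the paper cites the explicit formula $T_+^{-1}(U^{2^*_s-1})=\frac{1}{2s}\,x\cdot\nabla U$ from Frank--Lenzmann--Silvestre, whereas you use $U$ itself, verified directly from $L_+Q=(2-2^*_s)Q^{2^*_s-1}$; since the two particular solutions differ by a kernel element $\frac{N-2s}{2}U+x\cdot\nabla U$ (which is $D$-orthogonal to $U$ and drops out of the pairing with $(-\Delta)^sU$), the scalar relation you obtain is identical to the paper's $\sigma_v=-\frac{(c-a)(2s-N)}{2sc}\sigma_v$, and the sign argument is the same.
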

\begin{proof}Direct computation shows that $\frac{N-2s}{2}U+x \cdot \nabla U=\frac{N-2s}{2}U+r U^{\prime}(r)$ is indeed a radial solution to equation $\mathcal{L}_{+} \varphi=0$. We have to prove that $\frac{N-2s}{2}U+x \cdot \nabla U$ is the unique radial solution to equation $\mathcal{L}_{+} \varphi=0$ in $D_{\mathrm{rad}}$ up to a constant.
\par
Let $c=a+b\|(-\Delta)^{\frac{s}{2}}U\|^2_2$. Recall that $U$ is a ground state solution of \eqref{eq1.2}. It follows from above that $c$ is a constant independent of $U$ under the assumptions of Theorem \ref{Thm1.1}.
Let $\varphi \in D_{\text {rad }}$ satisfy $\mathcal{L}_{+} \varphi=0$. It is equivalent to
$$
\mathcal{A} \varphi\doteq c(-\Delta)^s\varphi-(2^*_s-1)U^{2^*_s-1}\varphi=-2b\Big({\int_{\mathbb{R}^{N}}}(-\Delta)^{\frac{s}{2}}U(-\Delta )^{\frac{s}{2}}v dx\Big)(-\Delta)^sU
$$
Write $e_{0}=\frac{N-2s}{2}U+x \cdot \nabla U$ for simplicity. Since $D_{\text {rad }}$ is a Hilbert space, denote by $D_{0}$ the orthogonal complement of $\mathbb{R} e_{0}$ in $D_{\text {rad }}$. Then $\varphi=\lambda e_{0}+v$ for some $\lambda \in \mathbb{R}$ and $v \in D_{0}$. By a direct computation, we find that $\int(-\Delta)^{\frac{s}{2}}u \cdot (-\Delta)^{\frac{s}{2}} e_{0}=0$. This implies $u \in D_{0}$. Moreover, note that \eqref{6} implies that $\mathcal{A}$ is invertible on $D_{0}$. It follows from $\mathcal{A} e_{0}=0$ and $\int(-\Delta)^{\frac{s}{2}}u \cdot (-\Delta)^{\frac{s}{2}} e_{0}=0$ that $v$ satisfies
\begin{equation}\label{eq3.5}
\mathcal{A}v=-2b\Big({\int_{\mathbb{R}^{N}}}(-\Delta)^{\frac{s}{2}}U(-\Delta )^{\frac{s}{2}}v dx\Big)(-\Delta)^sU=-\frac{2b\sigma_v}{c}(U^{2^*_s-1}),
\end{equation}
where
\begin{equation*}
\sigma_v={\int_{\mathbb{R}^{N}}}(-\Delta )^{\frac{s}{2}}U(-\Delta
)^{\frac{s}{2}}v dx.
\end{equation*}
By applying \cite[Theorem 3.3]{MR3530361}, we conclude that
\begin{equation}\label{eq3.6}
v=-\frac{2b\sigma_v}{c}T_+^{-1}(U^{2^*_s-1})=-\frac{b\sigma_v}{sc}\psi,
\end{equation}
where $\psi=x\cdot \nabla U$. Multiplying \eqref{eq3.6} by $(-\Delta)^sU$ and integrating over $\mathbb{R}^N$, we see that
\begin{equation}\label{eq3.7}
\int_{\mathbb{R}^{N}}v(-\Delta)^{s}Udx=-\frac{b\sigma_v}{sc}\int_{\mathbb{R}^{N}}\psi(-\Delta)^{s}Udx.
\end{equation}
Note that
\begin{equation}\label{eq3.8}
\int_{\mathbb{R}^{N}}v(-\Delta)^{s}Udx=\int_{\mathbb{R}^{N}}(-\Delta )^{\frac{s}{2}}U(-\Delta)^{\frac{s}{2}}v dx
\end{equation}
and
\begin{equation}\label{eq3.9}
\int_{\mathbb{R}^{N}}\psi(-\Delta)^{s}Udx=\frac{2s-N}{2}\int_{\mathbb{R}^{N}}|(-\Delta)^{\frac{s}{2}}U|^2 dx
\end{equation}
(see e.g. \cite{Ros2014The}). We then conclude from \eqref{eq3.7}-\eqref{eq3.9} that
\begin{equation*}
\sigma_v=-\frac{b(2s-N)\sigma_v}{2sc}\int_{\mathbb{R}^{N}}|(-\Delta
)^{\frac{s}{2}}U|^2 dx=-\frac{(c-a)(2s-N)}{2sc}\sigma_v,
\end{equation*}
which implies that $\sigma_v=0$ due to $-\frac{(c-a)(2s-N)}{2sc}\neq1$. From \eqref{eq3.5}, we have $v=0$.
Therefore, $\mathcal{A} v=0$. Recall $v \in D_{0}$. Applying \eqref{6} gives $v=0$. Thus, we obtain $\varphi=\lambda e_{0}$. The proof is complete.
\end{proof}
\par
Since $(-\Delta )^sU=c^{-1}(+U^{2^*_s-2})$ and $U(x)=U(|x|)$ is radial function, the operator $L_+$ commutes with rotations in $\mathbb{R}^N$ (see e.g. \cite{MR2561169}). Therefore, we can decompose $L^2(\mathbb{R}^N)$ using spherical harmonics
$L^2(\mathbb{R}^N)=\oplus_{l\geq 0}\mathcal{H}_l$
so that $L_+$ acts invariantly on each subspace
$\mathcal{H}_l=L^2(\mathbb{R}_+, r^{N-1}dr)\otimes\mathcal{Y}_l.$
Here $\mathcal{Y}_l=span\{Y_{l,m}\}_{m\in M_l}$ denotes space of the spherical harmonics of degree $l$ in space dimension $N$ and $M_l$ is an index set depending on $l$ and $N$.
We can describe the action of $L_+$ more precisely. For each $l$, the action of $L_+$ on the radial factor in $\mathcal{H}_l$ is given
by
\begin{equation*}
(L_{+,l}f)(r)=c\Big((-\Delta_l)^sf\Big)(r)+f(r)-pU^{p-1}(r)f(r)+2b(W_lf)(r)
\end{equation*}
with the nonlocal linear operator
\begin{equation*}
(W_lf)(r)=\frac{2\pi^{\frac{N}{2}}}{\Gamma(\frac{N}{2})}\Big((-\Delta_l)^sU\Big)(r)\int_0^{+\infty}\Big((-\Delta_l)^sU\Big)(r)r^{N-1}f(r)dr
\end{equation*}
for $f\in C_0^{\infty}(\mathbb{R}^+)\subset L^2(\mathbb{R}_+, r^{N-1}dr)$. Here
$(-\Delta_l)^s$ is given by spectral calculus and the known formula
\begin{equation*}
-\Delta_l=-\partial_r^2-\frac{N-1}{r}\partial_r+\frac{l(l+N-2)}{r^2}.
\end{equation*}
\par
Applying  arguments similar to that used in \cite{MR3530361} and \cite{MR2561169}, one can verify that each $L_{+,l}$ enjoys a Perron-Frobenius property, that is, if $E=\inf \sigma(\mathcal{H}_l)$ ia an eigenvalue, then $E$ is simple and the corresponding eigenfunction can be chosen strictly positive. Moreover, we have $L_{+,l}>0$ for $l\geq 2$ in the sense of quadratic forms (see e.g. \cite{MR2561169}).
\par
{\bf Proof of nondegeneracy: } Since $\partial _{x_i}U(x)=U'(r)\frac{x_i}{r}\in \mathcal{H}_1$, this shows that $L_{+,1}U'=0$. Note that $U'(r)<0$. It follows from the Perron-Frobenius property that $0$ is the lowest eigenvalue  of $L_{+,1}$, with $-U'(r)$ being its an corresponding eigenfunction. Therefore, for any $v\in \mathcal{H}_1$ satisfying $L_{+,1}v=0$ must be some linear combination of $\{\partial _{x_i}U:i=1,2,\cdots, N\}$ and $\frac{N-2s}{2}U+x \cdot \nabla U$. Recall that $L_{+,l}>0$ for $l\geq 2$. Applying the Perron-Frobenius property again, $0$ cannot be an eigenvalue of $L_{+,l}>0$ for $l\geq 2$. Finally, Lemma \ref{Lem3.1} implies that $L_{+,0}=\{0\}$. Consequently, for any $v\in \ker L_+$, we conclude that $v\in \mathcal{H}_1$ and hence $\ker L_+=\ker L_{+,1}=span\{\partial_{x_1}U, \partial_{x_2}U,\cdots,
\partial_{x_N}U,\frac{N-2s}{2}U+x \cdot \nabla U\}$. The proof is completed.

\section*{Acknowledgments}

The authors are very grateful for the anonymous reviewers for their careful readings the manuscript and the valuable comments. 

This work was supported by National Natural Science Foundation of China (12261107) , Yunnan Fundamental Research Projects (202201AU070031, 202401AT070123), Scientific Research Fund of Yunnan Educational Commission (2023J0199) and Yunnan Key Laboratory of Modern Analytical Mathematics and Applications (202302AN360007).
\section*{Conflict of Interests}
The Author declares that there is no conflict of interest.
\bibliographystyle{plain}
\bibliography{yang}

\end{document}